\begin{document}
\title[Positive solutions for a fourth-order boundary value problem]{Existence and multiplicity of positive solutions to a fourth-order multi-point boundary value problem}
\author[F. Haddouchi]{Faouzi Haddouchi}
\address{
Department of Physics, University of Sciences and Technology of
Oran-MB, El Mnaouar, BP 1505, 31000 Oran, Algeria
\newline
And
\newline
Laboratoire de Math\'ematiques Fondamentales et Appliqu\'ees d'Oran (LMFAO). Universit\'e Oran1. B.P. 1524 El Mnaouer, Oran, Alg\'erie.}
\author[C. Guendouz]{Cheikh Guendouz}
\address{
Laboratoire de Math\'ematiques Fondamentales et Appliqu\'ees d'Oran (LMFAO), D\'epartement de Math\'ematiques, Universit\'e Oran1. B.P. 1524 El Mnaouer, Oran, Alg\'erie.}
\author[S. Benaicha]{Slimane Benaicha}
\address{
Laboratoire de Math\'ematiques Fondamentales et Appliqu\'ees d'Oran (LMFAO), D\'epartement de Math\'ematiques, Universit\'e Oran1. B.P. 1524 El Mnaouer, Oran, Alg\'erie.}

\email{fhaddouchi@gmail.com}
\email{guendouzmath@yahoo.fr}
\email{slimanebenaicha@yahoo.fr}
\subjclass[2010]{34B15, 34B18}
\keywords{Positive solutions, Krasnoselskii's fixed point theorem,
 fourth-order integral boundary value problems, existence,
 cone}

\begin{abstract}
In this paper, we study the existence and multiplicity of positive solutions for a nonlinear fourth-order with  multi-point  boundary  conditions involving an integral boundary condition. The main tool is Krasnosel'skii fixed point theorem on cones.
\end{abstract}

\maketitle \numberwithin{equation}{section}
\newtheorem{theorem}{Theorem}[section]
\newtheorem{lemma}[theorem]{Lemma}
\newtheorem{definition}[theorem]{Definition}
\newtheorem{proposition}[theorem]{Proposition}
\newtheorem{corollary}[theorem]{Corollary}
\newtheorem{remark}[theorem]{Remark}
\newtheorem{exmp}{Example}[section]

\section{Introduction\label{sec:1}}
Boundary value problems related to nonlocal conditions have many applications in many problems such as in the theory of heat conduction, thermoelasticity, plasma physics, control theory, etc. The current analysis of these problems has a great interest and many methods are used to solve such problems. Recently, the study of existence of positive solution to fourth-order boundary value problems has gained much attention and is rapidly growing field, see \cite{And, Gra, Ben,Han, Ma,y sun,Kang,Zou,Li,Webb}. However, the approaches used in the literature are usually topological degree theory and fixed-point theorems in cone \cite{Kras}.

 Multi-point boundary value problems have received considerable interest in the mathematical applications in different areas of science and engineering \cite{BO YANG1,Zhang2,Bai,Zhang}.\\

 In 2007, M. Zhang and Z. Wei \cite{Zhang},  studied the existence of multiple positive solutions for fourth-order $m$-point boundary value problem
       \begin{equation}\label{eq001234}
       \begin{cases}u^{(4)}(t) + B(t)u^{\prime \prime}-A(t)u= f(t,u), \ 0< t<1, \\
      u(0)= \sum_{i=1}^{m-2} a_{i}u(\xi_{i}),\ u(1)=\sum_{i=1}^{m-2} b_{i} u(\xi_{i}),\\
      u^{\prime \prime}(0)= \sum_{i=1}^{m-2} a_{i} u^{\prime \prime}(\xi_{i}),\ u^{\prime \prime}(1)= \sum_{i=1}^{m-2} b_{i} u^{\prime \prime}(\xi_{i}).
       \end{cases}
       \end{equation}
 And in the same year, X. Zhang and L. Liu  \cite{Zhang2}, considered the fourth-order multi-point boundary value problems with bending term
       \begin{equation}\label{eq0012345}
              \begin{cases} x^{(4)}(t) = g(t) f(t,x(t),x^{\prime \prime}(t)) , \ t \in (0,1),\\
             x(0)= 0 ,\ x(1)= \sum_{i=1}^{m-2} a_{i} x(\xi_{i}),\ x^{\prime \prime}(0)= 0 ,\
             x^{\prime \prime}(1)= \sum_{i=1}^{m-2} b_{i} x^{\prime \prime}(\xi_{i}).
              \end{cases}
              \end{equation}

 In 2016, S. Benaicha and F. Haddouchi \cite{Ben}, considered the following  fourth-order two-point boundary value problem
  \begin{equation}\label{eq0003}
    u^{\prime \prime \prime \prime}(t) + f(u(t)) = 0,\  t \in (0,1),
    \end{equation}
    \begin{equation}\label{eq0004}
    u^{\prime}(0) = u^{\prime} (1) =u^{\prime \prime}(0) = 0,\  u(0)= \int_{0}^{1} a(s) u(s) ds.
    \end{equation}

  In 2017, Bo Yang \cite{BO YANG1}, studied the fourth-order differential equation
            \begin{equation}\label{eq000351}
               u^{\prime \prime \prime \prime}(t) =g(t) f(u(t)),\  t \in (0,1),
               \end{equation}
           together with boundary conditions
            \begin{equation}\label{eq000352}
              u(0) = \alpha u^{\prime}(0)-\beta u^{\prime \prime}(0) = \gamma u^{\prime}(1) + \delta u^{\prime \prime}(1) =  u^{\prime \prime \prime}(1) = 0.
               \end{equation}

 In 2018, Yan. D and R. Ma \cite{Yan}, investigated the global behavior of positive solutions of fourth-order
boundary value problems
 \begin{equation}\label{eq000353}
               u^{\prime \prime \prime \prime} ={\lambda}f(x, u),\  x \in (0,1),
               \end{equation}
           together with boundary conditions
            \begin{equation}\label{eq000354}
              u(0) =u(1)= u^{\prime \prime}(0) = u^{\prime \prime}(1)= 0.
               \end{equation}
where $f : [0, 1] \times {\mathbb{R}}^{+} \rightarrow \mathbb{R}$ is a continuous function with $f(x, 0) < 0$ in $(0, 1)$, and $\lambda> 0$. The proof of main results are based upon bifurcation techniques.

 In 2019, Wei. Y et al. \cite{Wei}, considered the following boundary value problem
 \begin{equation}\label{eq000353}
               u^{(4)}(t) =f(t,u(t),u^{\prime}(t)),\  t \in (0,1),
               \end{equation}
           subject to the boundary conditions
            \begin{equation}\label{eq000354}
              u(0) = u^{\prime}(0)=u^{\prime}(1) =u^{\prime \prime}(1)=0.
               \end{equation}

Under some conditions of $f$, the existence and uniqueness of this problem is obtained.

For some other results on boundary value problems, we refer the reader to the papers \cite{Gra,Han,Ma2,shen,BO YANG2,Zhang1,Lv}.

 Motivated by these works, in this paper, we are concerned with the following fourth-order multi-point with integral boundary condition
 \begin{equation}\label{eq001}
  u^{\prime \prime \prime \prime}(t) + f(t,u(t)) = 0,\  t \in (0,1),
  \end{equation}
  \begin{equation}\label{eq002}
  u^{\prime} (0) = u^{\prime} (1) =u^{\prime \prime}(0) = 0,\ u(0) = \alpha \int_{0}^{1}u(s)ds + \sum_{i=1}^{n} \beta_{i} u(\eta_{i}),
  \end{equation}
  where
  \begin{itemize}
 \item[(C1)] $f \in C([0,1]\times [0,\infty) ,[0,\infty));$
\item[(C2)] $\alpha\geq 0,\beta_{i}\geq 0,1\leq i \leq n $\ \text{and} \ $0 < \eta_{1}<\eta_{2} <... <\eta_{n}< 1;$
\item[(C3)] $\alpha + \sum_{i=1}^{n} \beta_{i}  < 1.$
  \end{itemize}

 This paper is organized as follows. In section 2, we present some theorems and lemmas that will be used to prove our main results. In section 3, we discuss the existence of at least one positive solution for  \eqref{eq001}-\eqref{eq002}. In section 4, we investigate the existence of multiple positive solutions for \eqref{eq001}-\eqref{eq002}. Finally, we give some examples to illustrate our results in section 5.

 \section{Preliminaries}
 At first, we consider the Banach space $C([0,1],\mathbb{R})$  equipped with the sup norm \[\|u\|=\\sup_{t\in[0, 1]}|u(t)|.\]
 \begin{definition}
 Let $E$ be a real Banach space. A nonempty, closed, convex set $
 K\subset E$ is a cone if it satisfies the following two conditions:
 \begin{itemize}
 \item[(i)]
  $x\in K$, $\lambda \geq 0$ imply $\lambda x\in K$;
 \item[(ii)]
 $x\in K$, $-x\in K$ imply $x=0$.
 \end{itemize}
 \end{definition}

 \begin{definition}
 An operator $T:E\rightarrow E$ \ is completely continuous if it is continuous
 and maps bounded sets into relatively compact sets.
 \end{definition}
 \begin{definition}
 $ A $ function $ u(t)$ is called a positive solution of \eqref{eq001} and \eqref{eq002}
 if $ u \in C ([0,1]) $ and $ u(t) > 0 $ for all $ t \in(0,1).$
 \end{definition}

 To prove our results, we need the following well-known fixed point theorem
 of cone expansion and compression of norm type due to Krasnoselskii \cite{Kras}.

 \begin{theorem}\label{T1}
 Let $E$ be a Banach space, and let $K\subset E$, be a cone. Assume that $%
 \Omega_{1}$ and $\Omega_{2}$ are bounded open subsets of $E$ with $0\in \Omega _{1}$,
 $\overline{{\Omega }}_{1}\subset \Omega_{2}$ and let
 \[
 A:K\cap  ( \overline{{\Omega}}_{2} \backslash \Omega_{1} )\rightarrow K
 \]
 be a completely continuous operator such that
 \begin{itemize}
 \item[(a)]
 $\left\Vert Au\right\Vert \leq \left\Vert u\right\Vert ,$ $u\in K\cap
 \partial
 \Omega _{1}$, and $\left\Vert Au\right\Vert \geq \left\Vert u\right\Vert ,$
 $u\in K\cap \partial \Omega_{2}$; or
 \item[(b)]
 $\left\Vert Au\right\Vert \geq \left\Vert u\right\Vert ,$ $u\in K\cap
 \partial
 \Omega_{1}$, and  $\left\Vert Au\right\Vert \leq \left\Vert u\right\Vert ,$
 $u\in K\cap \partial \Omega_{2}.$
 \end{itemize}
 Then $A$ has a fixed point in $K\cap  ( \overline{{\Omega }}_{2} \backslash \Omega_{1} )$.
 \end{theorem}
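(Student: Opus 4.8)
The plan is to deduce Theorem~\ref{T1} from the theory of the fixed point index for completely continuous self-maps of a cone. Recall that to every bounded open set $\Omega\subset E$ with $0\in\Omega$ and every completely continuous $A\colon K\cap\overline\Omega\to K$ with no fixed point on $K\cap\partial\Omega$ one attaches an integer $i(A,K\cap\Omega,K)$ with the usual properties: normalization (the constant map $u\mapsto u_0$, $u_0\in K\cap\Omega$, has index $1$), additivity and excision, homotopy invariance along homotopies that are fixed-point-free on $K\cap\partial\Omega$, and the solution property ($i(A,K\cap\Omega,K)\neq0$ implies $A$ has a fixed point in $K\cap\Omega$). From the start we may assume $A$ has no fixed point on $K\cap\partial\Omega_1$ nor on $K\cap\partial\Omega_2$, since otherwise we are done; moreover, because $0\in\Omega_1$ and $\Omega_1,\Omega_2$ are open there is $\rho>0$ with $\|u\|\geq\rho$ for $u\in\partial\Omega_1\cup\partial\Omega_2$, and by complete continuity $\|u\|+\|Au\|\leq M$ on $K\cap\overline\Omega_2$ for some $M>0$.

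The first step is the index computation under the compression inequality: \emph{if $A$ has no fixed point on $K\cap\partial\Omega$ and $\|Au\|\leq\|u\|$ for $u\in K\cap\partial\Omega$, then $i(A,K\cap\Omega,K)=1$}. Indeed, the homotopy $h(s,u)=sAu$, $s\in[0,1]$, is fixed-point-free on $K\cap\partial\Omega$: at $s=0$ because $0\notin\partial\Omega$, at $s=1$ by assumption, and for $s\in(0,1)$ because $sAu=u$ would give $\|u\|=s\|Au\|\leq s\|u\|<\|u\|$. Hence $i(A,K\cap\Omega,K)=i(0,K\cap\Omega,K)=1$ by homotopy invariance and normalization.

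The second, and principal, step is the index computation under the expansion inequality: \emph{if $A$ has no fixed point on $K\cap\partial\Omega$ and $\|Au\|\geq\|u\|$ for $u\in K\cap\partial\Omega$, then $i(A,K\cap\Omega,K)=0$}. The idea is to deform $A$, through maps that remain fixed-point-free on $K\cap\partial\Omega$, to a completely continuous map which has no fixed point in $K\cap\overline\Omega$ at all, whence its index is $0$ by excision. Fixing $e\in K$ with $\|e\|=1$ and considering the translated maps $u\mapsto Au+te$ for $t\geq0$, one uses the bound $\|u\|+\|Au\|\leq M$ to see that for $t$ large these maps have empty fixed point set on $K\cap\overline\Omega$, while the expansion inequality $\|Au\|\geq\|u\|\geq\rho>0$ on $K\cap\partial\Omega$ is what keeps the deformation admissible, i.e. prevents an equality $Au+te=u$ with $u\in K\cap\partial\Omega$; then $i(A,K\cap\Omega,K)=0$ follows by homotopy invariance and excision. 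I expect this step to be the main obstacle: unlike the compression case, the deformation must be engineered so as to stay fixed-point-free on the boundary \emph{throughout}, and it is precisely here that the hypothesis on the norms is essential and must be used with some care (alternatively one may invoke the corresponding index lemma in the standard references on the fixed point index on cones).

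Finally we combine the two steps. Assume (a). Then $i(A,K\cap\Omega_1,K)=1$ and $i(A,K\cap\Omega_2,K)=0$. Since $\overline\Omega_1\subset\Omega_2$, the sets $\Omega_1$ and $\Omega_2\setminus\overline\Omega_1$ are disjoint open subsets of $\Omega_2$, and $A$ has no fixed point on $K\cap(\partial\Omega_1\cup\partial\Omega_2)$, so additivity gives
\[
i(A,K\cap\Omega_2,K)=i(A,K\cap\Omega_1,K)+i\big(A,K\cap(\Omega_2\setminus\overline\Omega_1),K\big),
\]
whence $i\big(A,K\cap(\Omega_2\setminus\overline\Omega_1),K\big)=0-1=-1\neq0$, and the solution property yields a fixed point of $A$ in $K\cap(\Omega_2\setminus\overline\Omega_1)\subset K\cap(\overline\Omega_2\setminus\Omega_1)$. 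Case (b) is identical with the values $1$ and $0$ of the two indices interchanged, giving $i\big(A,K\cap(\Omega_2\setminus\overline\Omega_1),K\big)=1\neq0$ and the same conclusion.
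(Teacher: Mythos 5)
The paper does not prove Theorem \ref{T1} at all; it is quoted from Krasnosel'skii's book \cite{Kras}, so the only issue is whether your index-theoretic argument stands on its own. Its architecture (index $1$ on the ``compression'' boundary, index $0$ on the ``expansion'' boundary, additivity over the annulus) is the standard modern proof, and your first step and the final additivity computation are fine, modulo one housekeeping point: $A$ is only defined on $K\cap(\overline{\Omega}_{2}\setminus\Omega_{1})$, so before you can speak of $i(A,K\cap\Omega_{1},K)$, of $i(A,K\cap\Omega_{2},K)$, or of a bound for $\|Au\|$ on $K\cap\overline{\Omega}_{2}$, you must first extend $A$ to a completely continuous map of $K\cap\overline{\Omega}_{2}$ into $K$ (e.g.\ by Dugundji's extension theorem, whose extension takes values in the closed convex hull of the image, hence in $K$); the indices and the additivity are then those of the extension, which agrees with $A$ on both boundaries.

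The genuine gap is in your principal step, the index-zero computation. You claim that the inequality $\|Au\|\geq\|u\|\geq\rho>0$ on $K\cap\partial\Omega$ ``prevents an equality $Au+te=u$ with $u\in K\cap\partial\Omega$''. It does not. Take $E=\mathbb{R}^{2}$ with the max norm, $K$ the first quadrant, $u=(1,1)$, $Au=(\tfrac12,1)$, $e=(1,0)$, $t=\tfrac12$: then $Au\in K$, $\|Au\|=1\geq\|u\|$, $Au\neq u$, and yet $Au+te=u$. The same configuration occurs in $C[0,1]$ with the positive cone (subtract a bump away from the maximum of $u$), which is precisely the setting in which the paper applies the theorem. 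So the translation homotopy need not be admissible on $K\cap\partial\Omega$, and the norm hypothesis cannot be fed into it pointwise. What the expansion inequality does give cheaply is admissibility of the scaling homotopy, i.e.\ $Au\neq\mu u$ for $u\in K\cap\partial\Omega$ and $0<\mu\leq 1$; but scaling $A$ up does not produce a map without fixed points in $K\cap\overline{\Omega}$, since nothing is assumed about $A$ in the interior (for instance $A0=0$ is allowed), so this alone does not yield index $0$ either. The statement you need here is exactly the classical index lemma (Guo and Lakshmikantham, \emph{Nonlinear Problems in Abstract Cones}, Section 2.3): if $Au\neq u$ and $\|Au\|\geq\|u\|$ on $K\cap\partial\Omega$, then $i(A,K\cap\Omega,K)=0$; its proof is genuinely more delicate than the translation argument you sketch. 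As written, therefore, the main step of your proposal is unsupported; it becomes correct if you replace it by a citation of (or a faithful reproduction of) that lemma, as you yourself suggest as a fallback, but then the essential content of the theorem is being imported rather than proved.
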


 Let the multi-point boundary value problem
 \begin{equation}\label{eq1}
 u^{\prime \prime \prime \prime}(t) + y(t) = 0,\  t \in (0,1),
 \end{equation}
 \begin{equation}\label{eq2}
 u^{\prime} (0) = u^{\prime} (1) =u^{\prime \prime}(0) = 0,\ u(0) = \alpha \int_{0}^{1}u(s)ds + \sum_{i=1}^{n} \beta_{i} u(\eta_{i}).
 \end{equation}

For convenience, we denote $ k= 1-\bigg(\alpha + \sum_{i=1}^{n} \beta_{i}\bigg).$
\begin{lemma}\label{l1}
Let $k \neq 0 $. Then for any $ y \in C[0,1] $, the boundary value problem \eqref{eq1}-\eqref{eq2} has a unique solution which can be expressed by
$$ u(t) = \int_{0}^{1}  H(t,s)y(s) ds ,$$
where $ H(t,s) : [0,1] \times [0,1]\rightarrow \mathbb{R}$ is the Green's function defined by
\begin{equation}\label{eq51}
H(t,s)= G(t,s) +\frac{\alpha}{k} \int_{0}^{1}G(\tau ,s ) d\tau +\frac{1}{k} \sum_{i=1}^{n} \beta_{i} G( \eta_{i} ,s ) ,
\end{equation}
and
\begin{equation}\label{eq5}
G(t,s)=\frac{1}{6} \begin{cases}t^{3}(1-s)^{2}-(t-s)^{3}, & 0 \leq s \leq t \leq 1; \\
t^{3}(1-s)^2 , & 0 \leq t \leq s \leq 1.
\end{cases}
\end{equation}
\end{lemma}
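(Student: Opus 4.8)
The plan is to solve the linear fourth-order problem \eqref{eq1}-\eqref{eq2} in two stages, first handling the two-point part of the boundary conditions to obtain the kernel $G(t,s)$, and then correcting for the nonlocal term $u(0)=\alpha\int_0^1 u\,ds+\sum_i\beta_i u(\eta_i)$ to obtain the full Green's function $H(t,s)$. First I would integrate $u^{(4)}(t)=-y(t)$ four times, writing the general solution as
\[
u(t)=-\frac{1}{6}\int_0^t (t-s)^3 y(s)\,ds + c_3 t^3 + c_2 t^2 + c_1 t + c_0 .
\]
Then I would impose $u''(0)=0$, which forces $c_2=0$; next $u'(0)=0$, which forces $c_1=0$; and then $u'(1)=0$, which (after differentiating the integral term and evaluating at $t=1$) determines $c_3$ in terms of $\int_0^1 (1-s)^2 y(s)\,ds$, giving $c_3=\tfrac{1}{6}\int_0^1(1-s)^2 y(s)\,ds$. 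Substituting back and splitting the integral $\int_0^t$ versus $\int_0^1$ yields exactly $u(t)=\int_0^1 G(t,s)y(s)\,ds + c_0$ with $G$ as in \eqref{eq5}; this is the routine bookkeeping step.

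The second stage is to pin down the remaining constant $c_0=u(0)$ using the nonlocal condition. Writing $A:=\int_0^1 G(t,s)y(s)\,ds$ evaluated appropriately, we have $u(0)=A(0)+c_0$, and substituting $u(t)=\int_0^1 G(t,s)y(s)\,ds+c_0$ into $u(0)=\alpha\int_0^1 u(s)\,ds+\sum_{i=1}^n\beta_i u(\eta_i)$ gives
\[
c_0 = \alpha\int_0^1\!\!\int_0^1 G(\tau,s)y(s)\,ds\,d\tau + \sum_{i=1}^n\beta_i\int_0^1 G(\eta_i,s)y(s)\,ds + \Big(\alpha+\sum_{i=1}^n\beta_i\Big)c_0 ,
\]
using $G(0,s)=0$. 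Solving the resulting scalar equation for $c_0$ — here is where the hypothesis $k=1-(\alpha+\sum\beta_i)\neq 0$ is exactly what is needed — gives
\[
c_0=\frac{1}{k}\left(\alpha\int_0^1\!\!\int_0^1 G(\tau,s)y(s)\,ds\,d\tau + \sum_{i=1}^n\beta_i\int_0^1 G(\eta_i,s)y(s)\,ds\right),
\]
and after interchanging the order of integration in the double integral and collecting all terms over the single variable $s$, one reads off $u(t)=\int_0^1 H(t,s)y(s)\,ds$ with $H$ as in \eqref{eq51}. Uniqueness follows because every step was an equivalence: the four integration constants and $c_0$ are uniquely determined once $k\neq 0$.

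The main obstacle, such as it is, is purely computational: correctly carrying out the repeated integration and, in particular, the differentiation of $-\tfrac16\int_0^t(t-s)^3 y(s)\,ds$ with respect to $t$ (the boundary term $(t-s)^3$ vanishes at $s=t$, so only the integral of the $t$-derivative survives), and then keeping careful track of the Fubini interchange $\int_0^1\int_0^1 G(\tau,s)y(s)\,ds\,d\tau=\int_0^1\big(\int_0^1 G(\tau,s)\,d\tau\big)y(s)\,ds$ so that the kernel $H(t,s)$ emerges in the stated form. No analytic subtlety beyond $k\neq 0$ is involved; the verification that the constructed $u$ genuinely satisfies \eqref{eq1}-\eqref{eq2} can be done by direct substitution as a sanity check, but is automatic from the construction.
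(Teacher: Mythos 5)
Your proposal is correct and follows essentially the same route as the paper's proof: integrate $u^{(4)}=-y$ four times, use $u''(0)=u'(0)=u'(1)=0$ to fix three of the constants (giving the kernel $G$), and then solve the scalar equation arising from the nonlocal condition for the remaining constant $u(0)$, which is possible precisely because $k\neq 0$. The only cosmetic difference is that you package the intermediate result as $u(t)=\int_0^1 G(t,s)y(s)\,ds+c_0$ with $G(0,s)=0$ before imposing the integral/multi-point condition, whereas the paper substitutes the full expression directly; the computations and the role of $k$ are identical.
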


\begin{proof}
Rewriting \eqref{eq1} as $ u^{\prime \prime \prime \prime}(t) = -y(t) $ and integrating three times over the interval $ [0, t]$ for $ t \in [0,1] $, we obtain
$$ u^{\prime \prime \prime}(t) = - \int_{0}^{t} y(s) ds + C_{1}, $$
$$ u^{\prime \prime}(t) = - \int_{0}^{t} (t-s)y(s) ds + C_{1} t + C_{2}, $$
$$ u^{\prime}(t) = -\frac{1}{2} \int_{0}^{t}( t- s)^{2} y(s) ds +\frac{1}{2} C_{1} t^{2} + C_{2} t + C_{3}, $$
\begin{equation} \label{eq363}
u(t) = -\frac{1}{6} \int_{0}^{t}( t- s)^{3} y(s) ds +\frac{1}{6} C_{1} t^{3} +\frac{1}{2} C_{2} t^{2} + C_{3} t + C_{4},
\end{equation}
where $C_{1}, C_{2}, C_{3}, C_{4} \in \mathbb{R}$ are constants.
By \eqref{eq2}, we get \\
\[C_{2} =C_{3}= 0\ \text{and}\ C_{1} = \int_{0}^{1} (1-s)^{2} y(s) ds.\]
Further,
\begin{eqnarray*}
C_{4}& =&u(0)\\
 & =& \alpha  \int_{0}^{1} \bigg( -\frac{1}{6} \int_{0}^{\tau} (\tau-s)^{3}y(s) ds\\
 &&+\frac{\tau^{3}}{6} \int_{0}^{1} (1-s)^{2} y(s) ds + C_{4}  \bigg) d\tau \\
 &&+\sum_{i=1}^{n} \beta_{i} \bigg(-\frac{1}{6} \int_{0}^{\eta_{i}} (\eta_{i} -s)^{3} y(s) ds + \frac{\eta_{i}^{3}}{6} \int_{0}^{1} (1-s)^{2} y(s) ds + C_{4}   \bigg)\\
& =& \alpha \int_{0}^{1} \bigg( -\frac{1}{6} \int_{0}^{\tau} (\tau-s)^{3}y(s) ds + \frac{\tau^{3}}{6} \int_{0}^{1} (1-s)^{2} y(s) ds \bigg) d\tau \\
 &&+\sum_{i=1}^{n} \beta_{i} \bigg(-\frac{1}{6} \int_{0}^{\eta_{i}} (\eta_{i} -s)^{3} y(s) ds + \frac{\eta_{i}^{3}}{6} \int_{0}^{1} (1-s)^{2} y(s) ds   \bigg)\\
  &&+ C_{4} \bigg( \alpha +\sum_{i=1}^{n} \beta_{i} \bigg),
\end{eqnarray*}
so
\begin{equation}
\begin{split}
C_{4}= & \frac{\alpha}{k}  \int_{0}^{1} \bigg( -\frac{1}{6} \int_{0}^{\tau} (\tau-s)^{3}y(s) ds + \frac{\tau^{3}}{6} \int_{0}^{1} (1-s)^{2} y(s) ds  \bigg) d\tau \\
 & + \frac{1}{k} \sum_{i=1}^{n} \beta_{i} \bigg(-\frac{1}{6} \int_{0}^{\eta_{i}} (\eta_{i} -s)^{3} y(s) ds + \frac{\eta_{i}^{3}}{6} \int_{0}^{1} (1-s)^{2} y(s) ds \bigg).
\end{split}
\end{equation}
Replacing these expressions in \eqref{eq363}, we get

\begin{eqnarray*}
u(t)& =&- \frac{1}{6} \int_{0}^{t} (t-s)^{3} y(s) ds + \frac{t^{3}}{6} \int_{0}^{1} (1-s)^{2} y(s) ds \\
 && + \frac{\alpha}{k} \int_{0}^{1} \bigg( -\frac{1}{6} \int_{0}^{\tau} (\tau-s)^{3}y(s) ds + \frac{\tau^{3}}{6} \int_{0}^{1} (1-s)^{2} y(s) ds  \bigg) d\tau \\
 && + \frac{1}{k} \sum_{i=1}^{n} \beta_{i} \bigg(-\frac{1}{6} \int_{0}^{\eta_{i}} (\eta_{i} -s)^{3} y(s) ds + \frac{\eta_{i}^{3}}{6} \int_{0}^{1} (1-s)^{2} y(s) ds   \bigg) \\
 &=& \frac{1}{6} \int_{0}^{t} [ t^{3}(1-s)^{2} - (t-s)^{3} ]  y(s) ds +\frac{1}{6} \int_{t}^{1} t^{3} (1-s)^{2} y(s) ds \\
 && + \frac{\alpha}{6k} \int_{0}^{1} \bigg( \int_{0}^{\tau}[ \tau^{3} (1-s)^{2} - (\tau-s)^{3} ] y(s) ds
  +\int_{\tau}^{1} \tau^{3} (1-s)^{2} y(s) ds  \bigg) d\tau \\
 && + \frac{1}{6k}  \sum_{i=1}^{n} \beta_{i} \bigg( \int_{0}^{\eta_{i}} [ \eta_{i}^{3} (1-s)^{2} - (\eta_{i} -s)^{3} ]  y(s) ds + \eta_{i}^{3} \int_{\eta_{i}}^{1} (1-s)^{2} y(s) ds    \bigg)\\
 & =&\int_{0}^{1} \bigg( G(t,s) + \frac{\alpha}{k} \int_{0}^{1} G(\tau,s) d\tau + \frac{1}{k} \sum_{i=1}^{n}\beta_{i} G(\eta_{i} ,s)\bigg) y(s) ds \\
 & =& \int_{0}^{1} H(t,s) y(s) ds .
\end{eqnarray*}
\end{proof}

\begin{lemma}\label{l2} Let $\theta\in (0,\frac{1}{2})$ be fixed. $ G(t,s)$ defined by \eqref{eq5} satisfies
\begin{itemize}
\item[(i)] $ G(t,s) \geq 0 $, for all\ $t, s \in [0,1],$
\item[(ii)] $\rho(t)e(s) \leq G(t,s)\leq e(s),$  \ for all \  $(t,s) \in [0,1] \times [0,1],$
where $ e(s)=\frac{1}{6} s(1-s)^{2},$ and
\begin{equation*}
\rho(t)=\min\{t^{3}, t^{2}(1-t)\}=\begin{cases} t^{3},& t\leq \frac{1}{2};  \\
t^{2}(1-t), & t \geq \frac{1}{2}.
\end{cases}
\end{equation*}
 \item[(iii)] $\theta^{3} e(s) \leq G(t,s)\leq e(s) ,$  \ for all \  $(t,s) \in [\theta , 1- \theta] \times [0,1].$
\end{itemize}
\end{lemma}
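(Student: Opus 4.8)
The plan is to argue directly from the two explicit branches of $G$ in \eqref{eq5}, using repeatedly the elementary inequality $t(1-s)\ge t-s$ valid for $0\le s\le t\le 1$ (it is just $s(1-t)\ge 0$), together with the identity $G(1,s)=\tfrac16\big[(1-s)^2-(1-s)^3\big]=\tfrac16 s(1-s)^2=e(s)$. Part (i) is then quick: on the branch $0\le t\le s\le 1$ we have $G(t,s)=\tfrac16 t^3(1-s)^2\ge 0$, while on the branch $0\le s\le t\le 1$, since $0\le t-s\le t(1-s)$ and $1-s\le 1$, we get $(t-s)^3\le t^3(1-s)^3\le t^3(1-s)^2$, so $G(t,s)\ge 0$.

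For the upper estimate in (ii) (which also yields the upper estimate in (iii)), I would show that $t\mapsto G(t,s)$ is nondecreasing on $[0,1]$ for each fixed $s$. On the region $t\le s$ one has $\partial_t G=\tfrac12 t^2(1-s)^2\ge 0$; on the region $s\le t$ one has $\partial_t G=\tfrac12\big(t^2(1-s)^2-(t-s)^2\big)\ge 0$, because $0\le t-s\le t(1-s)$. Since the two branches agree at $t=s$, the function $G(\cdot,s)$ is continuous and piecewise $C^1$ with nonnegative one-sided derivatives, hence nondecreasing, and therefore $G(t,s)\le G(1,s)=e(s)$. (If one prefers to avoid differentiation, set $\psi(t)=t^3(1-s)^2-(t-s)^3-s(1-s)^2$ on $[s,1]$ and observe that $\psi(1)=0$ and $\psi'(t)=3s(1-t)\big(t(1-s)+(t-s)\big)\ge 0$, so $\psi\le 0$ there; on the branch $t\le s$ the bound follows from $t^3\le t\le s$, i.e. $G(t,s)=\tfrac16 t^3(1-s)^2\le\tfrac16 s(1-s)^2=e(s)$.)

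For the lower estimate in (ii) I would first establish the sharper inequality $G(t,s)\ge t^3 e(s)$ on all of $[0,1]\times[0,1]$, and then invoke $\rho(t)=\min\{t^3,t^2(1-t)\}\le t^3$. On $t\le s$ the claim reduces to $\tfrac16 t^3(1-s)^2\ge\tfrac16 t^3 s(1-s)^2$, i.e. to $s\le 1$. On $s\le t$, after clearing $\tfrac16$, it is equivalent to $t^3(1-s)^3\ge(t-s)^3$, i.e. to $t(1-s)\ge t-s$, i.e. again to $s(1-t)\ge 0$. Hence $\rho(t)e(s)\le t^3 e(s)\le G(t,s)$, which is the left inequality of (ii); and for $t\in[\theta,1-\theta]$ we get in particular $G(t,s)\ge t^3 e(s)\ge\theta^3 e(s)$, which together with the already-proved bound $G(t,s)\le e(s)$ gives (iii).

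The only step that requires any thought is the upper bound on the triangle $\{\,0\le s\le t\le 1\,\}$, handled either by the monotonicity of $G(\cdot,s)$ in $t$ or by the auxiliary function $\psi$; everything else in the lemma reduces to the single elementary fact $s(1-t)\ge 0$, so I do not expect a genuine obstacle.
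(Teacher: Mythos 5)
Your argument is correct: the nonnegativity, the monotonicity of $G(\cdot,s)$ (or the auxiliary function $\psi$) giving $G(t,s)\le G(1,s)=e(s)$, and the reduction of the lower bound to $t(1-s)\ge t-s$, i.e.\ $s(1-t)\ge 0$, all check out, including the factorization $\psi'(t)=3s(1-t)\bigl(t(1-s)+(t-s)\bigr)$. Note, however, that the paper does not prove this lemma at all: its ``proof'' is a citation to Lemma~2.3 of the earlier paper by Benaicha and Haddouchi, where the same Green's function $G$ appears. So your proposal is not a variant of the paper's argument but a self-contained replacement for the outsourced proof, and it is in fact slightly sharper than what is stated: you establish $t^{3}e(s)\le G(t,s)\le e(s)$ on all of $[0,1]\times[0,1]$, from which the stated lower bound with $\rho(t)=\min\{t^{3},t^{2}(1-t)\}\le t^{3}$ in (ii) and the bound $\theta^{3}e(s)\le G(t,s)$ in (iii) follow in one line. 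The only cosmetic remark is that since your intermediate bound makes the $t^{2}(1-t)$ branch of $\rho$ irrelevant, you could state explicitly that you are proving a stronger inequality than (ii) as written; otherwise a reader may wonder where the minimum in $\rho$ is used.
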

\begin{proof}
 See reference \emph{\cite[Lemma 2.3]{Ben}}.
\end{proof}
In the remainder of this paper, we always assume that $ k> 0$.
 \begin{lemma}\label{l4}
 Let $ 	y(t) \in C ([0,1],[0,\infty )) $ and $ \theta \in (0, \frac{1}{2})$. The unique solution  of \eqref{eq1}-\eqref{eq2} is nonnegative and satisfies
 \[ \min_{t\in [\theta, 1-\theta]} u(t) \geq \theta^{3}(1-2\theta) \|u\|.\]
 \end{lemma}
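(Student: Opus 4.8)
The plan is to read the two assertions directly off the Green's-function representation $u(t)=\int_{0}^{1}H(t,s)y(s)\,ds$ supplied by Lemma~\ref{l1}, using only the elementary pointwise bounds on $G$ collected in Lemma~\ref{l2}. It is convenient to write $H(t,s)=G(t,s)+C(s)$, where
\[
C(s):=\frac{\alpha}{k}\int_{0}^{1}G(\tau,s)\,d\tau+\frac{1}{k}\sum_{i=1}^{n}\beta_{i}\,G(\eta_{i},s)
\]
is independent of $t$ (see \eqref{eq51}). Since $G\ge 0$ by Lemma~\ref{l2}(i), and $\alpha\ge 0$, $\beta_{i}\ge 0$, $k>0$, we get $C(s)\ge 0$, hence $H(t,s)\ge 0$ for all $t,s$. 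As $y\ge 0$, this immediately yields $u(t)\ge 0$ on $[0,1]$, which is the nonnegativity part.

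Next I would identify the sup-norm. A direct computation from \eqref{eq5} gives $G(1,s)=e(s)$, and Lemma~\ref{l2}(ii) says $G(t,s)\le e(s)$ for every $t,s$. Hence $H(t,s)\le e(s)+C(s)=H(1,s)$ for all $t,s$, so $u(t)\le u(1)$ for every $t\in[0,1]$; together with $u\ge 0$ this shows $\|u\|=u(1)=\int_{0}^{1}H(1,s)y(s)\,ds$.

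For the lower estimate, fix $t\in[\theta,1-\theta]$. Lemma~\ref{l2}(iii) gives $G(t,s)\ge\theta^{3}e(s)=\theta^{3}G(1,s)$, while $C(s)\ge\theta^{3}C(s)$ because $0\le\theta^{3}\le 1$ and $C(s)\ge 0$. Adding these,
\[
H(t,s)=G(t,s)+C(s)\ \ge\ \theta^{3}\bigl(G(1,s)+C(s)\bigr)=\theta^{3}H(1,s)\qquad(s\in[0,1]).
\]
Multiplying by $y(s)\ge 0$ and integrating gives $u(t)\ge\theta^{3}u(1)=\theta^{3}\|u\|$ for every $t\in[\theta,1-\theta]$. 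Taking the minimum over $t$ and noting $0<1-2\theta<1$,
\[
\min_{t\in[\theta,1-\theta]}u(t)\ \ge\ \theta^{3}\|u\|\ \ge\ \theta^{3}(1-2\theta)\|u\|,
\]
which is the claimed inequality.

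I do not expect a serious obstacle here. The two points needing a little care are (i) that the norm is attained at the right endpoint, which hinges on the sharp bound $G(t,s)\le e(s)$ together with the identity $e(s)=G(1,s)$, and (ii) that one may pull the factor $\theta^{3}$ past the $t$-independent term $C(s)$, which is legitimate since $\theta^{3}\le 1$ and $C(s)\ge 0$. A minor point worth flagging is that this route actually produces the stronger constant $\theta^{3}$; the factor $\theta^{3}(1-2\theta)$ stated in the lemma then follows trivially from $1-2\theta<1$.
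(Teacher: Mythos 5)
Your argument is correct, and it is not the route the paper takes. The paper never locates the point where $\|u\|$ is attained: it bounds $\|u\|$ from above by the auxiliary integral $\int_{0}^{1}\bigl(\bigl(1+\tfrac{\alpha}{k}\bigr)e(s)+\tfrac{1}{k}\sum_{i=1}^{n}\beta_{i}G(\eta_{i},s)\bigr)y(s)\,ds$ and then bounds $u(t)$ on $[\theta,1-\theta]$ from below by $\theta^{3}(1-2\theta)$ times that same integral; the factor $(1-2\theta)$ enters precisely because the term $\tfrac{\alpha}{k}\int_{0}^{1}G(\tau,s)\,d\tau$ is estimated from below by shrinking the integration to $[\theta,1-\theta]$ and using $G(\tau,s)\ge\theta^{3}e(s)$ there. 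You instead observe the identity $e(s)=G(1,s)$, so that $H(t,s)\le H(1,s)$ and hence $\|u\|=u(1)$, and then treat the whole $t$-independent part $C(s)$ uniformly via $C(s)\ge\theta^{3}C(s)$, obtaining the pointwise kernel comparison $H(t,s)\ge\theta^{3}H(1,s)$ on $[\theta,1-\theta]\times[0,1]$. This avoids the lossy estimate on $\int_{0}^{1}G(\tau,s)\,d\tau$ altogether and yields the sharper constant $\theta^{3}$ in place of $\theta^{3}(1-2\theta)$, from which the stated inequality follows trivially since $0<1-2\theta<1$. The only thing your sharper constant would change downstream is that one could define the cone $K$ with $\theta^{3}$ instead of $\theta^{3}(1-2\theta)$ and correspondingly improve $\Psi$; as written, your conclusion implies the lemma exactly as stated, so it plugs into the rest of the paper without modification.
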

 \begin{proof}
 The positiveness of $ u(t) $ follows immediately from Lemma \ref{l1} and Lemma \ref{l2}.\\
For all $ t \in[0,1] $, we have
\begin{equation*}
 \begin{split}
  u(t)&= \int_{0}^{1} H(t,s) y(s) ds \\
  &= \int_{0}^{1} \left( G(t,s) +\frac{\alpha}{k} \int_{0}^{1}G(\tau ,s ) d\tau +\frac{1}{k} \sum_{i=1}^{n} \beta_{i} G( \eta_{i} ,s ) \right) y(s) ds \\
  &\leq \int_{0}^{1}\left( e(s) + \frac{\alpha}{k} \int_{0}^{1}e(s) d\tau  + \frac{1}{k} \sum_{i=1}^{n} \beta_{i} G( \eta_{i} ,s ) \right) y(s) ds \\
 &=  \int_{0}^{1}\left( \bigg(1+\frac{\alpha}{k} \bigg) e(s)  + \frac{1}{k} \sum_{i=1}^{n} \beta_{i} G( \eta_{i} ,s ) \right) y(s) ds .\\
 \end{split}
 \end{equation*}
 Then
 \begin{equation}\label{eq214}
 \|u\| \leq  \int_{0}^{1}\left( \bigg(1+\frac{\alpha}{k}\bigg) e(s) + \frac{1}{k} \sum_{i=1}^{n} \beta_{i} G( \eta_{i} ,s ) \right) y(s) ds.
 \end{equation}
 For $ t\in [\theta, 1-\theta] $, we have
\begin{equation}\label{eq314}
\begin{split}
 u(t)&= \int_{0}^{1} H(t,s) y(s) ds \\
 &= \int_{0}^{1} \left( G(t,s) +\frac{\alpha}{k} \int_{0}^{1}G(\tau ,s ) d\tau +\frac{1}{k} \sum_{i=1}^{n} \beta_{i} G( \eta_{i} ,s ) \right)  y(s) ds \\
&\geq \int_{0}^{1} \left(  G(t,s) +\frac{\alpha}{k} \int_{\theta}^{1-\theta} G(\tau ,s ) d\tau + \frac{1}{k} \sum_{i=1}^{n} \beta_{i} G( \eta_{i} ,s ) \right) y(s) ds \\
&\geq  \int_{0}^{1} \left( \theta^3 e(s) + \frac{\alpha}{k} \theta^3( 1- 2\theta)e(s)+\frac{1}{k}  \sum_{i=1}^{n} \beta_{i} G( \eta_{i} ,s )\right)y(s) ds \\
& \geq \theta^{3} (1-2\theta)  \int_{0}^{1} \left( \bigg(1+\frac{\alpha}{k} \bigg) e(s) + \frac{1}{k}  \sum_{i=1}^{n} \beta_{i} G( \eta_{i} ,s )\right) y(s) ds .\\
\end{split}
\end{equation}
From \eqref{eq214} and \eqref{eq314}, we obtain $$\min_{t\in [\theta, 1-\theta]} u(t) \geq  \theta^{3} (1-2\theta) \|u\|.$$
 \end{proof}
 Let $ \theta \in (0, \frac{1}{2}) $. We define the cone $$K= \left\lbrace u \in C([0,1],\ \mathbb{R}),\ u \geq 0 :  \min_{t\in [\theta ,1-\theta]} u(t) \geq  \theta^{3}(1-2\theta)  \|u\| \right\rbrace,$$
 and the operator $ T : K \rightarrow C[0,1] $ by
 \begin{equation}\label{eq16}
T u(t)= \int_{0}^{1} H(t,s) f(s , u(s)) ds ,
 \end{equation}
 where $ H(t,s) $ is defined by \eqref{eq51}.
 \begin{remark}
 By Lemma \ref{l1}, the fixed points of the operator $ T $ in $ K $ are the nonnegative solutions of the boundary value problem \eqref{eq001}-\eqref{eq002}.
 \end{remark}
 \begin{lemma}
 The operator $ T $ defined in \eqref{eq16} is completely continuous and satisfies $ TK \subset K.$
 \end{lemma}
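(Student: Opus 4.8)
The plan is to check the two assertions in turn. For the inclusion $TK\subset K$, I would fix $u\in K$ and put $y(t):=f(t,u(t))$, which by (C1) belongs to $C([0,1],[0,\infty))$. Since Lemma \ref{l2}(i) gives $G(t,s)\ge 0$ and since $\alpha,\beta_i\ge 0$ with $k>0$, the kernel $H(t,s)$ in \eqref{eq51} is nonnegative, so $Tu(t)=\int_0^1 H(t,s)y(s)\,ds\ge 0$ for all $t$. The key observation is that $Tu$ is exactly the unique solution of the linear problem \eqref{eq1}--\eqref{eq2} with this right-hand side $y$ (Lemma \ref{l1}); hence Lemma \ref{l4} applies verbatim and yields $\min_{t\in[\theta,1-\theta]}Tu(t)\ge\theta^{3}(1-2\theta)\|Tu\|$. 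Together with the continuity of $Tu$ established below, this shows $Tu\in K$.

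For complete continuity I would argue via the Arzel\`a--Ascoli theorem. First note that $G$ is continuous on $[0,1]\times[0,1]$ (its two polynomial branches agree on $t=s$), hence $H$ defined by \eqref{eq51} is continuous on $[0,1]^2$, and in particular $0\le H(t,s)\le\overline H$ for some constant $\overline H$. \emph{Continuity of $T$:} if $u_n\to u$ in $K$, then $\{u_n\}$ is bounded, say $\|u_n\|\le M$ for all $n$ and $\|u\|\le M$, and the uniform continuity of $f$ on the compact set $[0,1]\times[0,M]$ forces $f(\cdot,u_n(\cdot))\to f(\cdot,u(\cdot))$ uniformly on $[0,1]$; therefore $\|Tu_n-Tu\|\le\overline H\int_0^1|f(s,u_n(s))-f(s,u(s))|\,ds\to 0$. \emph{Compactness:} for a bounded set $B\subset K$ with $\|u\|\le M$ on $B$, put $L:=\max\{f(s,x):(s,x)\in[0,1]\times[0,M]\}$; then $\|Tu\|\le L\,\overline H$ for every $u\in B$, so $T(B)$ is uniformly bounded, and since $H$ is uniformly continuous on $[0,1]^2$, given $\varepsilon>0$ there is $\delta>0$ with $|H(t_1,s)-H(t_2,s)|<\varepsilon/(L+1)$ whenever $|t_1-t_2|<\delta$, uniformly in $s$, whence $|Tu(t_1)-Tu(t_2)|\le L\int_0^1|H(t_1,s)-H(t_2,s)|\,ds<\varepsilon$ for all $u\in B$; thus $T(B)$ is equicontinuous. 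Arzel\`a--Ascoli then gives that $T(B)$ is relatively compact, and combined with the continuity above, $T$ is completely continuous.

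I do not expect a genuine obstacle here: the argument is a routine combination of the sign and size estimates for $H$ coming from Lemma \ref{l2}, the a priori bound of Lemma \ref{l4}, and the Arzel\`a--Ascoli criterion. The only two points that deserve a line of care are checking the continuity and boundedness of $H$ from its piecewise formula, and invoking Lemma \ref{l4} with the specific choice $y=f(\cdot,u(\cdot))$, so that the cone estimate for $Tu$ is inherited rather than re-derived.
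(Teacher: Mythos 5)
Your argument is correct and follows the same route as the paper: the inclusion $TK\subset K$ is obtained by applying Lemma \ref{l4} with $y=f(\cdot,u(\cdot))$, and complete continuity via the Arzel\`a--Ascoli theorem. The paper states these two steps in one line without detail, so your proposal simply supplies the routine verifications (continuity and boundedness of $H$, uniform convergence of $f(\cdot,u_n(\cdot))$, equicontinuity of $T(B)$) that the authors left implicit.
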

 \begin{proof}
 From Lemma \ref{l4}, we obtain $TK \subset K $. By an application of Arzela-Ascoli theorem, $ T$ is completely continuous.
 \end{proof}

 For convenience, we introduce the following notations
 $$ f_{0} = \lim _{u\rightarrow 0^{+}} \bigg \{ \min_{0\leq t \leq 1} \frac{f(t,u)}{u}  \bigg \} ,\  f^{0} = \lim_{u\rightarrow 0^{+}} \bigg \{ \max _{0\leq t \leq 1} \frac{f(t,u)}{u} \bigg \} ,$$
 $$f_{\infty} = \lim_{u\rightarrow +\infty} \bigg \{ \min_{0\leq t \leq 1} \frac{f(t,u)}{u} \bigg \} ,\ f^{\infty} = \lim_{u\rightarrow +\infty} \bigg \{ \max _{0\leq t \leq 1} \frac{f(t,u)}{u} \bigg \} ,$$

 $$ \Psi= \theta^{6} (1-2\theta)^{2} \int_{\theta}^{1-\theta} \left( \bigg(1+\frac{\alpha}{k}\bigg) e(s) + \frac{1}{k} \sum_{i=1}^{n}\beta_{i} G( \eta_{i} ,s )\right) ds,$$
  $$ \Phi =\frac{1}{6 k},\ \Lambda_{1}= \Phi^{-1} ,\ \Lambda_{2}= \Psi^{-1}. $$

\section{Existence results}
\begin{theorem}\label{th1}
Assume that one of the following hypotheses is satisfied.
\begin{itemize}
\item[(H1)] $ f_{0} = \infty $ and $ f^{\infty} = 0$.
\item[(H2)]$ f^{0}= 0 $ and $ f_{\infty} = \infty$.
\end{itemize}
Then, the problem \eqref{eq001}-\eqref{eq002} has at least one positive solution in $ K.$
\end{theorem}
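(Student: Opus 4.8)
The plan is to apply Krasnosel'skii's theorem (Theorem \ref{T1}) to the operator $T$ of \eqref{eq16}, which is already known to be completely continuous with $TK\subset K$. A fixed point $u=Tu\in K$ with $\|u\|>0$ is then a positive solution: it is nonnegative and $\min_{[\theta,1-\theta]}u\ge\theta^3(1-2\theta)\|u\|>0$, while for $t\in(0,1)$ one has $u(t)=\int_0^1 H(t,s)f(s,u(s))\,ds>0$ since $H(t,s)\ge G(t,s)\ge\rho(t)e(s)>0$ on $(0,1)\times(0,1)$ and $f(\cdot,u(\cdot))\not\equiv0$ (otherwise $u\equiv0$). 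Everything rests on two norm estimates. The \emph{upper} one comes from $G(t,s)\le\frac16$ together with $k+\alpha+\sum_{i=1}^n\beta_i=1$, giving $H(t,s)\le\frac1{6k}=\Phi$ and hence $\|Tu\|\le\Phi\int_0^1 f(s,u(s))\,ds$ for all $u\in K$. The \emph{lower} one is obtained exactly as in the proof of Lemma \ref{l4}: restricting the integral defining $Tu(t)$ to $[\theta,1-\theta]$ and using Lemma \ref{l2}(iii) and $\int_0^1 G(\tau,s)\,d\tau\ge\theta^3(1-2\theta)e(s)$ yields, for $t\in[\theta,1-\theta]$,
\[
Tu(t)\ \ge\ \theta^3(1-2\theta)\int_{\theta}^{1-\theta}\Bigl(\bigl(1+\tfrac{\alpha}{k}\bigr)e(s)+\tfrac1k\sum_{i=1}^n\beta_i G(\eta_i,s)\Bigr)f(s,u(s))\,ds ,
\]
which, combined with the cone inequality $u(s)\ge\theta^3(1-2\theta)\|u\|$ on $[\theta,1-\theta]$, is where $\Psi$ and $\Lambda_2=\Psi^{-1}$ enter. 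Note $\Phi\Lambda_1=\Psi\Lambda_2=1$ by construction.

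Case (H1): $f_0=\infty$ and $f^\infty=0$. Since $f_0=\infty$, choose $r_1>0$ so small that $f(t,u)\ge\Lambda_2 u$ for all $t\in[0,1]$, $0<u\le r_1$, and set $\Omega_1=\{u:\|u\|<r_1\}$. For $u\in K\cap\partial\Omega_1$ one has $0<u(s)\le r_1$ and $u(s)\ge\theta^3(1-2\theta)\|u\|$ on $[\theta,1-\theta]$, so the lower estimate gives $Tu(t)\ge\Lambda_2\Psi\|u\|=\|u\|$ for $t\in[\theta,1-\theta]$, i.e.\ $\|Tu\|\ge\|u\|$ on $K\cap\partial\Omega_1$. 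Since $f^\infty=0$, put $\bar f(r)=\max\{f(t,u):0\le t\le1,\ 0\le u\le r\}$; $\bar f$ is nondecreasing and $\bar f(r)/r\to0$ as $r\to\infty$ (split $[0,r]$ into $[0,R_0]$, where $f$ is bounded, and $[R_0,r]$, where $f(t,u)\le\Lambda_1 u$). Pick $r_2>r_1$ with $\bar f(r_2)\le\Lambda_1 r_2$ and $\Omega_2=\{u:\|u\|<r_2\}$; then for $u\in K\cap\partial\Omega_2$, $f(s,u(s))\le\bar f(r_2)\le\Lambda_1\|u\|$, so $\|Tu\|\le\Phi\Lambda_1\|u\|=\|u\|$ on $K\cap\partial\Omega_2$. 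Theorem \ref{T1}(b) gives a fixed point $u\in K\cap(\overline{\Omega}_2\setminus\Omega_1)$ with $\|u\|\ge r_1>0$, a positive solution.

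Case (H2): $f^0=0$ and $f_\infty=\infty$. This is symmetric. From $f^0=0$ pick $r_1>0$ with $f(t,u)\le\Lambda_1 u$ for $0<u\le r_1$; with $\Omega_1=\{u:\|u\|<r_1\}$ the upper estimate gives $\|Tu\|\le\Phi\Lambda_1\|u\|=\|u\|$ on $K\cap\partial\Omega_1$. From $f_\infty=\infty$ pick $\widehat R>0$ with $f(t,u)\ge\Lambda_2 u$ for $u\ge\widehat R$; choose $r_2>r_1$ so large that $\theta^3(1-2\theta)r_2\ge\widehat R$ and set $\Omega_2=\{u:\|u\|<r_2\}$. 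For $u\in K\cap\partial\Omega_2$ we have $u(s)\ge\theta^3(1-2\theta)\|u\|\ge\widehat R$ on $[\theta,1-\theta]$, so the lower estimate applies and $\|Tu\|\ge\Lambda_2\Psi\|u\|=\|u\|$ on $K\cap\partial\Omega_2$. Theorem \ref{T1}(a) yields a fixed point with $\|u\|\ge r_1>0$.

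The only genuinely delicate point is controlling $f$ at infinity in case (H1): because $u(s)$ need not be large even when $\|u\|$ is, one cannot use $f(t,u)\le\Lambda_1 u$ pointwise, and the nondecreasing majorant $\bar f$ — together with the verification that $\bar f(r)/r\to0$ — is what converts $f^\infty=0$ into the usable bound $f(s,u(s))\le\Lambda_1\|u\|$ on a sphere of large radius. The remaining steps are the routine bookkeeping of Krasnosel'skii's argument, the constants $\Phi,\Psi$ having been arranged so that the compression/expansion inequalities hold with ratio exactly $1$.
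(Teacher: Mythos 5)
Your argument is correct and follows essentially the same route as the paper: the same cone, the same bounds $\Phi\Lambda_1=\Psi\Lambda_2=1$ derived from Lemma \ref{l2}, and the same application of Theorem \ref{T1} on spheres $\partial\Omega_1$, $\partial\Omega_2$ chosen from $f_0,f^0,f_\infty,f^\infty$. The only deviation is cosmetic: where the paper handles $f^{\infty}=0$ by splitting into the cases $f$ bounded/unbounded, you use the nondecreasing majorant $\bar f(r)$ with $\bar f(r)/r\to 0$, which packages the same estimate more cleanly (and your added verification that a nonzero fixed point is strictly positive on $(0,1)$ is a correct detail the paper leaves implicit).
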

\begin{proof}
Assume that \rm{(H1)} holds.\\
Since $ f_{0}= \infty $, there exists $ \rho_1 > 0 $ such that $ f(t,u) \geq \delta u $, for all  $ 0 < u \leq \rho_1,  t \in [0, 1]$, where $ \delta > 0 $ is chosen so that
\begin{equation*}
\delta \Psi \geq 1.
\end{equation*}
Then, for $ u \in K \cap \partial \Omega_1 $ and  $ t \in [\theta,1-\theta]$ with $ \Omega_1 = \{ u \in C[0,1] : \|u \| < \rho_1 \}$, we obtain
\begin{equation}\label{eq39}
\begin{split}
T u(t)&=  \int_{0}^{1} H(t,s) y(s) ds \\
&= \int_{0}^{1} \left( G(t,s) +\frac{\alpha}{k} \int_{0}^{1}G(\tau ,s ) d\tau +\frac{1}{k} \sum_{i=1}^{n} \beta_{i} G( \eta_{i} ,s ) \right)f(s , u(s)) ds \\
&\geq \int_{\theta}^{1-\theta} \left(  G(t,s) +\frac{\alpha}{k} \int_{\theta}^{1-\theta} G(\tau ,s ) d\tau + \frac{1}{k} \sum_{i=1}^{n} \beta_{i} G( \eta_{i} ,s ) \right) f(s,u(s)) ds \\
&\geq \int_{\theta}^{1-\theta} \left(  G(t,s) +\frac{\alpha}{k} \int_{\theta}^{1-\theta} G(\tau ,s ) d\tau + \frac{1}{k} \sum_{i=1}^{n} \beta_{i} G( \eta_{i} ,s ) \right) \delta u(s) ds \\
&\geq \delta \theta^3( 1- 2\theta) \| u\|  \int_{\theta}^{1-\theta} \left( \theta^3 e(s) + \frac{\alpha}{k} \theta^3( 1- 2\theta)e(s)+\frac{1}{k}  \sum_{i=1}^{n} \beta_{i} G( \eta_{i} ,s )\right) ds  \\
&\geq \delta \theta^6( 1- 2\theta)^{2} \| u\|  \int_{\theta}^{1-\theta} \left( \bigg(1+\frac{\alpha}{k}\bigg)e(s) +\frac{1}{k}  \sum_{i=1}^{n} \beta_{i} G( \eta_{i} ,s )\right) ds  \\
& = \delta \Psi \| u \| \geq \| u \|.
\end{split}
\end{equation}
Hence, $ \| Tu \| \geq \|u\| ,\ u \in K \cap \partial \Omega_1.$ \\
On the other hand, since $ f^{\infty} = 0 $, there exists $ \widehat{{\rho}}_{2} > 0\ ( \widehat{\rho}_{2} > \rho_{1})$ such that $ f(t,u)\leq \eta u $ for all
 $ t\in [0 ,1 ] $ with $ u \geq  \widehat{\rho}_{2} $ and $ \eta > 0 $ satisfies  $$ \eta \Phi \leq 1 .$$
We consider two cases :\\
Case 1. Suppose that $f$ is bounded, then there exists $ L> 0 $ such that $ f(t,u) \leq L .$\\ Let $ \Omega_2 = \{ u \in C[0,1]: \|u\| < \rho_2\} $ with
$ \rho_2 = \max \{2 \rho_1 , L \Phi \}$.\\ If $ u \in K \cap \partial \Omega_2 $, then by Lemma \ref{l2} we have
\begin{equation}\label{eq50}
\begin{split}
T u(t)&= \int_{0}^{1} H(t,s)f(s , u(s)) ds \\
&\leq L \int_{0}^{1} \left(  e(s)  +\frac{\alpha}{k} \int_{0}^{1} e(s ) d\tau + \frac{1}{k} \sum_{i=1}^{n} \beta_{i} e(s) \right)  ds \\
&\leq L \int_{0}^{1} e(s) \left( 1 +\frac{\alpha}{k}  + \frac{1}{k} \sum_{i=1}^{n} \beta_{i}  \right)  ds \\
&= \frac{L}{k}  \int_{0}^{1} e(s) ds \\
&\leq L \Phi  \\
& \leq \rho_2 = \|u\|,
\end{split}
\end{equation}
and consequently, $ \| Tu \| \leq \| u \| ,\ u \in K \cap \partial \Omega_2.$

Case 2. If $ f $ is unbounded, then from condition \rm{(C1)}, there exists $ \sigma > 0 $ such that $ f(t,u) \leq \eta \sigma $, with $ 0 < u \leq \widehat{\rho}_{2} $ and $ t\in [0,1]. $ \\
Let  $ \Omega_2 = \{ u \in C[0,1]: \|u\| < \rho_2 \} $, where  $ \rho_2 = \max \{ \sigma , \widehat{\rho}_{2} \} $.\\
If $ u \in K \cap \partial \Omega_2 $, then we have $ f(t,u) \leq \eta \rho_{2} $, and
\begin{equation}\label{eq50512}
\begin{split}
T u(t)&= \int_{0}^{1} H(t,s) f(s , u(s)) ds \\
&\leq \int_{0}^{1} \left( e(s)  +\frac{\alpha}{k}  e(s )  + \frac{1}{k} \sum_{i=1}^{n} \beta_{i} e(s) \right) \eta \rho_{2} ds \\
&\leq \eta \rho_{2} \frac{1}{k} \int_{0}^{1} e(s) ds \\
&\leq \eta \rho_{2} \Phi \\
&\leq \rho_{2} = \| u\| .
\end{split}
\end{equation}
So,  $ \| T u \| \leq \| u \| ,\ u \in K \cap \partial \Omega_2.$\\
Therefore by Theorem \ref{T1}, $T $ has at least one fixed point, which is a positive solution of \eqref{eq001}-\eqref{eq002} such that $ \rho_1 < \| u \| < \rho_2 .$

Next, assume that \rm{(H2)} holds. \\
Since $ f^{0}= 0 $, there exists $ \rho_1 >0 $ such that $ f(t,u) \leq \epsilon u $, for all $ 0< u \leq \rho_1 , t \in [0,1]  $, where $ \epsilon > 0 $ satisfies  $$ \epsilon \Phi \leq 1 .$$ \\
Then, for $ u \in K \cap \partial \Omega_1 $ with $ \Omega_1 = \{ u \in C[0,1] : \| u \| < \rho_1 \}$, we have
\begin{equation*}
\begin{split}
T u(t)&= \int_{0}^{1}  H(t,s) f(s , u(s)) ds \\
&\leq \int_{0}^{1} \left( e(s)  +\frac{\alpha}{k}  e(s )  + \frac{1}{k} \sum_{i=1}^{n} \beta_{i} e(s) \right)\epsilon u(s)  ds \\
&\leq \frac{1}{k} \epsilon  \| u \| \int_{0}^{1} e(s) ds \\
&\leq \epsilon \Phi \| u \| \\
&\leq \| u \| .
\end{split}
\end{equation*}
Therefore, $ \| T u \| \leq \| u \| ,\ u \in K \cap \partial \Omega_1$.\\
By $ f_{\infty} = \infty$, there exists $ \widehat{\rho}_2 > 0 $ such that $ f(t,u) \geq \delta u $, for all $ u > \widehat{\rho}_2 $ and $ t \in [ \theta , 1-\theta] $, where $ \delta > 0 $ is chosen so that
$$ \delta  \Psi \geq 1 .$$ \\
Let $ \rho_2 = \max \{ 2\rho_1, \frac{\widehat{\rho}_2}{\theta^3 (1 - 2\theta)}\} $ and
$ \Omega_2 = \{ u \in C[0,1] , \| u \| < \rho_2 \} $. \\
So, for all $ u \in K \cap \partial \Omega_2 $, it is  satisfied that:
$ u(t) \geq  \widehat{\rho}_2 $, $ t \in [\theta, 1-\theta ].$ Similar to the estimates  \eqref{eq39}, we obtain
\begin{equation*}
\begin{split}
T u(t)&= \int_{0}^{1} H(t,s) f(s , u(s)) ds \\
&\geq  \delta  \Psi  \| u \| \\
&\geq \| u \|.
\end{split}
\end{equation*}
 By Theorem \ref{T1}, we deduce that there exists a positive solution of the problem \eqref{eq001}-\eqref{eq002}.
\end{proof}

\section{Multiplicity results}
\begin{theorem}\label{th2}
Assume that the following assumptions are satisfied.
\begin{itemize}
\item[(H3)] $  f_{0}= f_{\infty} = \infty $.
\item[(H4)] There exist constants $ \rho_{1} > 0 $ and $ M_{1}\in(0, \Lambda_{1}] $ such that $ f(t,u) \leq M_{1} \rho_{1},$ for $ u \in (0,\rho_{1}]$ and $ t \in [0,1]. $
\end{itemize}
Then the problem \eqref{eq001}-\eqref{eq002} has at least two positive solutions $ u_{1}$ and $  u_{2}$
such that $$ 0 < \| u_{1} \| < \rho_{1} < \| u_{2} \|. $$
\begin{proof}
First, assume that \rm{(H3)} holds. Since $ f_{0}= \infty $, then for any $ M_{*} \in [\Lambda_{2} ,\infty)$, there exists $ \rho_{*}\in (0,\rho_{1}) $  such that $ f(t,u) \geq M_{*} u $, for all $ t\in [\theta , 1- \theta] $ and $ 0< u \leq \rho_{*} $. \\
 Set $\Omega_{\rho_{*}}= \{u \in C[0,1]: \| u \| < \rho_{*} \}.$ By using Lemma \ref{l2}, for $ u \in K \cap \partial\Omega_{\rho_{*}} $ and $ t \in [\theta, 1-\theta] $, we have
\begin{equation}\label{eq40}
\begin{split}
T u(t)&= \int_{0}^{1}H(t,s) f(s , u(s)) ds \\
&\geq \int_{\theta}^{1-\theta} \left(  G(t,s) +\frac{\alpha}{k} \int_{\theta}^{1-\theta} G(\tau ,s ) d\tau + \frac{1}{k} \sum_{i=1}^{n} \beta_{i} G( \eta_{i} ,s ) \right) f(s,u(s)) ds \\
&\geq \int_{\theta}^{1-\theta} \left(  G(t,s) +\frac{\alpha}{k} \int_{\theta}^{1-\theta} G(\tau ,s ) d\tau + \frac{1}{k} \sum_{i=1}^{n} \beta_{i} G( \eta_{i} ,s ) \right) M_{*} u(s) ds \\
&\geq M_{*} \theta^6( 1- 2\theta)^{2} \left[ \int_{\theta}^{1-\theta} \left( \bigg(1+\frac{\alpha}{k}\bigg) e(s) + \frac{1}{k}  \sum_{i=1}^{n} \beta_{i} G( \eta_{i} ,s )\right) ds \right]  \rho_{*} \\
& = M_{*}  \Lambda^{-1}_{2} \rho_{*} \\
& \geq \Lambda_{2}  \Lambda^{-1}_{2} \rho_{*} \\
& = \| u \|,
\end{split}
\end{equation}
which means that
 \begin{equation}\label{eq41}
  \| Tu \| \geq \| u \|,\ u \in K \cap  \partial \Omega_{\rho_{*}} .
\end{equation}
 On the other hand, since $ f_{\infty} = \infty $, then for any $ M^{*} \in[\Lambda_{2} , \infty)$, there exists $ \bar{{\rho}^{*}} > \rho_{1} $ such that $ f(t,u) \geq M^{*} u $, for all $ t \in [\theta , 1- \theta]$ and $ u \geq \bar{\rho^{*}} $. \\
 Let $ \rho^{*} \geq  \frac{\bar{\rho^{*}}}{\theta^3 (1-2\theta)} $ and $ \Omega_{\rho^{*}}=\{ u \in C[0,1] : \| u \| < \rho^{*}\}$. For all $ u \in K \cap \partial \Omega_{\rho^{*}} $, we have that $u(t) \geq  \bar{\rho^{*}},\ t \in [\theta,1-\theta]$.\\
  Hence, for $ t\in [\theta, 1-\theta] $, we get
\begin{equation}\label{eq159951}
\begin{split}
T u(t)&= \int_{0}^{1} H(t,s) f(s , u(s)) ds \\
&\geq \int_{\theta}^{1-\theta} \left(  G(t,s) +\frac{\alpha}{k} \int_{0}^{1} G(\tau ,s ) d\tau + \frac{1}{k} \sum_{i=1}^{n} \beta_{i} G( \eta_{i} ,s ) \right) f(s,u(s)) ds \\
&\geq \int_{\theta}^{1-\theta} \left(  G(t,s) +\frac{\alpha}{k} \int_{\theta}^{1-\theta} G(\tau ,s ) d\tau + \frac{1}{k} \sum_{i=1}^{n} \beta_{i} G( \eta_{i} ,s ) \right) M^{*} u(s) ds \\
& \geq \rho^{*} M^{*} \Lambda^{-1}_{2} \\
& \geq \rho^{*} \Lambda_{2} \Lambda^{-1}_{2} \\
& = \| u\|.
 \end{split}
\end{equation}
 Therefore
 \begin{equation}\label{eq42}
  \| Tu \| \geq \| u \| ,\ u \in K \cap \partial \Omega_{\rho^{*}}.
\end{equation}
Finally, set $ \Omega_{\rho_{1}} = \{u \in C[0,1] : \| u \| < \rho_{1} \} $. Then for any $ u \in K \cap \partial\Omega_{\rho_{1}} $, we get from \rm{(H4)} that $ f(t,u) \leq  M_{1} \rho_{1} $ for all $ t \in [0, 1] $, and similar to the estimates   \eqref{eq50}, we obtain
 \begin{equation}\label{eq9191}
 \begin{split}
 T u(t)&= \int_{0}^{1} \left( G(t,s) +\frac{\alpha}{k} \int_{0}^{1}G(\tau ,s ) d\tau +\frac{1}{k} \sum_{i=1}^{n} \beta_{i} G( \eta_{i} ,s ) \right)f(s , u(s)) ds \\
 &\leq \int_{0}^{1} e(s) \left(1 +\frac{\alpha}{k}   + \frac{1}{k} \sum_{i=1}^{n} \beta_{i}  \right)M_{1} \rho_{1} ds \\
 &\leq \Lambda_{1} \rho_{1}\frac{1}{k} \int_{0}^{1} e(s)   ds \\
 & \leq \Lambda_{1} \Lambda^{-1}_{1} \rho_{1} \\
 &= \| u \|,
 \end{split}
 \end{equation}
  which yields
  \begin{equation}\label{eq44}
   \| Tu \| \leq \| u \| ,\ u \in K \cap \partial\Omega_{\rho_{1}}.
  \end{equation}
Hence, since $ \rho_{*} < \rho_{1} < \rho^{*}$ and from \eqref{eq41},\eqref{eq42},\eqref{eq44}, it follows from Theorem \ref{T1} that $ T $ has a fixed point $ u_{1} $ in $ K \cap (\bar{\Omega}_{\rho_{1}}\setminus \Omega_{\rho_{*}})$ and a fixed point $ u_{2} $  in $  K \cap (\bar{\Omega}_{\rho^{*}}\setminus \Omega_{\rho_{1}}) .$\\
Both one positive solutions of the problem \eqref{eq001}-\eqref{eq002} and $ 0 < \| u_{1}\| < \rho_{1} < \| u_{2} \| .$
\end{proof}
\end{theorem}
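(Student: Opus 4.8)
The plan is to invoke Krasnoselskii's theorem (Theorem~\ref{T1}) twice, using the radius $\rho_1$ supplied by (H4) as the common boundary of two annuli: a small one on which $T$ is norm-expanding (thanks to $f_0=\infty$) and a large one on which $T$ is again norm-expanding (thanks to $f_\infty=\infty$), with norm-compression on the sphere $\|u\|=\rho_1$ separating them. Writing $\Omega_r=\{u\in C[0,1]:\|u\|<r\}$, I would produce radii $\rho_*<\rho_1<\rho^*$ with $\|Tu\|\ge\|u\|$ on $K\cap\partial\Omega_{\rho_*}$ and on $K\cap\partial\Omega_{\rho^*}$, and $\|Tu\|\le\|u\|$ on $K\cap\partial\Omega_{\rho_1}$. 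Since $T$ maps $K$ into $K$ (Lemma~\ref{l4}) and is completely continuous, Theorem~\ref{T1}(b) applied to $\overline{\Omega}_{\rho_1}\setminus\Omega_{\rho_*}$ and Theorem~\ref{T1}(a) applied to $\overline{\Omega}_{\rho^*}\setminus\Omega_{\rho_1}$ then yield fixed points $u_1$ and $u_2$ in the respective annuli; by Lemma~\ref{l1} these are positive solutions of \eqref{eq001}--\eqref{eq002}, separated by $\rho_1$.

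For the two expanding spheres I would argue as in \eqref{eq39}. Since $f_0=\infty$, fixing any $M_*\ge\Lambda_2$ gives $\rho_*\in(0,\rho_1)$ with $f(t,u)\ge M_*u$ for $t\in[\theta,1-\theta]$ and $0<u\le\rho_*$; for $u\in K\cap\partial\Omega_{\rho_*}$ and $t\in[\theta,1-\theta]$, restricting the integral $Tu(t)=\int_0^1H(t,s)f(s,u(s))\,ds$ to $[\theta,1-\theta]$, bounding $G$ below via Lemma~\ref{l2}(iii), and inserting the cone estimate $u(s)\ge\theta^3(1-2\theta)\|u\|$ on $[\theta,1-\theta]$, I reach $Tu(t)\ge M_*\theta^6(1-2\theta)^2\|u\|\int_\theta^{1-\theta}\big((1+\tfrac{\alpha}{k})e(s)+\tfrac1k\sum_{i=1}^n\beta_iG(\eta_i,s)\big)\,ds=M_*\Psi\|u\|\ge\|u\|$. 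Likewise, since $f_\infty=\infty$, fixing $M^*\ge\Lambda_2$ gives $\bar\rho^*>\rho_1$ with $f(t,u)\ge M^*u$ for $t\in[\theta,1-\theta]$ and $u\ge\bar\rho^*$; choosing $\rho^*\ge\bar\rho^*/(\theta^3(1-2\theta))$ forces every $u\in K\cap\partial\Omega_{\rho^*}$ to satisfy $u(t)\ge\theta^3(1-2\theta)\rho^*\ge\bar\rho^*$ on $[\theta,1-\theta]$, and the same chain gives $\|Tu\|\ge\|u\|$ on $K\cap\partial\Omega_{\rho^*}$.

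For the middle sphere I would use (H4) directly: for $u\in K\cap\partial\Omega_{\rho_1}$ one has $0\le u(t)\le\rho_1$, hence $f(t,u(t))\le M_1\rho_1$ for all $t\in[0,1]$ (the endpoint $u(t)=0$ being covered by continuity of $f$). Bounding $H(t,s)\le\big(1+\tfrac{\alpha}{k}+\tfrac1k\sum_{i=1}^n\beta_i\big)e(s)$ via Lemma~\ref{l2}(ii), using $\tfrac1k\int_0^1e(s)\,ds\le\Phi=\Lambda_1^{-1}$, and recalling $M_1\le\Lambda_1$, I obtain $Tu(t)\le M_1\rho_1\Phi\le\Lambda_1\Lambda_1^{-1}\rho_1=\rho_1=\|u\|$, so $\|Tu\|\le\|u\|$ on $K\cap\partial\Omega_{\rho_1}$, exactly as in \eqref{eq50}. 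Since $\rho_*<\rho_1<\rho^*$, the two applications of Theorem~\ref{T1} described above then produce the two positive solutions with $0<\|u_1\|<\rho_1<\|u_2\|$.

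The individual estimates are routine once Lemmas~\ref{l2} and~\ref{l4} are available; the points that actually need care are securing the ordering $\rho_*<\rho_1<\rho^*$ (which is why $f_0$ must be exploited at a radius strictly below $\rho_1$ and $f_\infty$ at one strictly above) and the fact that $u_1$ and $u_2$ lie in annuli meeting only on the sphere $\|u\|=\rho_1$, which is what makes the two solutions distinct and both nontrivial. Notably, no splitting of $f$ into bounded and unbounded subcases is required here — in contrast to the proof of Theorem~\ref{th1} — since $f_0=\infty$ and $f_\infty=\infty$ already furnish the one-sided lower bounds needed for the expansion estimates.
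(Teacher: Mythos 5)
Your proposal is correct and follows essentially the same route as the paper's proof: expansion on spheres of radius $\rho_*<\rho_1$ and $\rho^*>\rho_1$ via the estimates of \eqref{eq39}/\eqref{eq159951} using $f_0=f_\infty=\infty$ with constants $M_*,M^*\ge\Lambda_2$, compression on $\|u\|=\rho_1$ via (H4) and the bound $H(t,s)\le\frac{1}{k}e(s)$ as in \eqref{eq50}, and two applications of Theorem~\ref{T1} on the annuli $\overline{\Omega}_{\rho_1}\setminus\Omega_{\rho_*}$ and $\overline{\Omega}_{\rho^*}\setminus\Omega_{\rho_1}$. Your remark that no bounded/unbounded case split is needed (and your handling of the $u(t)=0$ endpoint by continuity of $f$) is a small but accurate refinement of the paper's argument.
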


\begin{theorem}\label{th3}
Assume that the following assumptions are satisfied.
\begin{itemize}
\item[(H5)] $ f^{0}= f^{\infty} = 0 $.
\item[(H6)] There exist constants $ \rho_{2} >0 $ and $ M_{2}\in [\Lambda_{2},\infty)$ such that $ f(t,u) \geq M_{2} \rho_{2}$, for $ u \in [\theta^{3}(1-2\theta)\rho_{2},\rho_{2} ]$ and $ t \in [\theta,1-\theta]$.
\end{itemize}
Then the problem \eqref{eq001}-\eqref{eq002} has at least two positive solutions $ u_{1}$ and $  u_{2}$
such that $$ 0 < \| u_{1} \| < \rho_{2} < \| u_{2} \| .$$
\begin{proof}
Assume that \rm{(H5)} holds. Firstly, since $ f^{0} =0 $, for any $ \epsilon \in (0,\Lambda_{1}]$, there exists $ \rho_{*} \in (0,\rho_{2})  $ such that $ f(t,u) \leq \epsilon u $, for all $ t\in [0, 1 ] $ where $ 0< u \leq \rho_{*}.$\\
 Then, for $ u \in K \cap \partial\Omega_{\rho_{*}}$ with $ \Omega_{\rho_{*}}= \{ u \in C[0,1] : \| u \| < \rho_{*} \} $, we have
\begin{equation*}
\begin{split}
T u(t)&= \int_{0}^{1} H(t,s) f(s , u(s)) ds \\
&\leq \int_{0}^{1} \left( e(s)  +\frac{\alpha}{k}  e(s )  + \frac{1}{k} \sum_{i=1}^{n} \beta_{i} e(s) \right)f(s, u(s) )  ds \\
&\leq \int_{0}^{1}e(s) \left(1  +\frac{\alpha}{k}  + \frac{1}{k} \sum_{i=1}^{n} \beta_{i} \right)\epsilon u(s)   ds \\
&\leq  \epsilon \rho_{*} \frac{1}{k} \int_{0}^{1} e(s) ds\\
&\leq \epsilon \Lambda^{-1}_{1} \rho_{*}\\
&\leq \rho_{*}=\| u \|.
\end{split}
\end{equation*}
 Therefore
\begin{equation}\label{eq47}
 \| Tu \| \leq \| u \|, \ u \in K \cap \partial\Omega_{\rho_{*}}.
\end{equation}
\\
Secondly, in view of $ f^{\infty} = 0 $, for any $ \epsilon_{1} \in (0,\Lambda_{1}]$, there exists $ \widetilde{\rho}> \rho_{2} $ such that $ f(t,u) \leq \epsilon_{1} u $, for all $ t \in [0, 1] $ with $ u \geq \widetilde{\rho}.$

We consider two cases : \\
Case 1. Suppose that $ f $ is bounded. Let $ L > 0 $ be such that $ f(t,u) \leq L$, for all $u\in [0,\infty)$ and $t\in [0,1]$.\\
Taking $ \rho^{*} \geq \max \{\widetilde{\rho}, \frac{L}{\epsilon_{1}}\}.$
For $u\in K$ with $\|u\|=\rho^{*}$, we have
\begin{equation*}
\begin{split}
T u(t)&= \int_{0}^{1} \left( G(t,s) +\frac{\alpha}{k} \int_{0}^{1}G(\tau ,s ) d\tau +\frac{1}{k} \sum_{i=1}^{n} \beta_{i} G( \eta_{i} ,s ) \right)f(s , u(s)) ds \\
&\leq L \Phi \\
&\leq \rho^{*} \epsilon_{1}\Lambda_{1}^{-1} \\
&\leq \rho^{*} =\| u \|,
\end{split}
\end{equation*}
and consequently
\begin{equation}\label{eq9876}
 \| Tu \| \leq \| u \| ,\ u \in K \cap \partial \Omega_{\rho^{*}}.
\end{equation}
Case 2. Suppose that $ f $ is unbounded, then from condition \rm{(C1)}, there exists $ \sigma > 0 $ such that $ f(t,u) \leq \epsilon_{1} \sigma $, with $ 0 \leq u \leq \widetilde{\rho} $, and $t\in[0,1]$.\\
For $u\in K$ with $\|u\|=\rho^{*}$, where $ \rho^{*} \geq \max \{ \sigma , \widetilde{\rho}\} $, we obtain
\begin{equation*}
\begin{split}
T u(t)&= \int_{0}^{1} H(t,s) f(s , u(s)) ds \\
&\leq \int_{0}^{1} \left( e(s)  +\frac{\alpha}{k} e(s )+ \frac{1}{k} \sum_{i=1}^{n} \beta_{i} e(s) \right)f(s, u(s) )  ds \\
&\leq \int_{0}^{1}e(s) \left(1  +\frac{\alpha}{k}  + \frac{1}{k} \sum_{i=1}^{n} \beta_{i} \right)\epsilon_{1} \rho^{*} ds \\
&\leq  \epsilon_{1} \rho^{*} \Lambda^{-1}_{1}\\
&\leq \rho^{*}=\| u \|.
\end{split}
\end{equation*}
We conclude that
\begin{equation}\label{eq8765}
 \| Tu \| \leq \| u \| ,\ u \in K \cap \partial \Omega_{\rho^{*}}.
\end{equation}
Hence, in either case, we always may set $ \Omega_{\rho^{*}} = \{u \in C[0,1] : \| u \| < \rho^{*} \} $ such that $\| Tu \| \leq \| u \|$,\ for $\ u \in K \cap \partial \Omega_{\rho^{*}}.$

Now, set $ \Omega_{\rho_{2}} = \{u \in C[0,1] : \| u \| < \rho_{2} \} $. Then for any $ u \in K \cap \partial\Omega_{\rho_{2}} $, we get from \rm{(H6)} that there exists $M_{2} \in [\Lambda_{2},\infty)$ such that $ f(t,u) \geq M_{2} \rho_{2} $ for all  $ t \in [\theta , 1- \theta ]$, and $ u \in [\theta^{3}( 1-2\theta) \rho_{2},\rho_{2}].$ Similar to the  estimates of \eqref{eq159951}, we get
\begin{equation}\label{eq8181}
\begin{split}
T u(t) &\geq M_{2} \theta^{3}( 1- 2\theta) \rho_{2} \int_{\theta}^{1-\theta} \left( \bigg(1+\frac{\alpha}{k}\bigg) e(s) + \frac{1}{k}  \sum_{i=1}^{n} \beta_{i} G( \eta_{i} ,s )\right) ds  \\
 &\geq M_{2} \rho_{2} \theta^6( 1- 2\theta)^{2} \int_{\theta}^{1-\theta} \left( \bigg(1+\frac{\alpha}{k}\bigg) e(s) + \frac{1}{k}  \sum_{i=1}^{n} \beta_{i} G( \eta_{i} ,s )\right) ds  \\
& = M_{2} \Lambda_{2}^{-1} \rho_{2}  \\
& \geq \rho_{2}=\|u\|.
\end{split}
\end{equation}
 Then
\begin{equation}\label{eq8888}
 \|Tu\| \geq \| u \| ,\ u \in K \cap \partial\Omega_{\rho_{2}}.
\end{equation}
Hence, from \eqref{eq47},\eqref{eq9876},\eqref{eq8765} and \eqref{eq8888}, it follows from Theorem \ref{T1} that there exist at least two positive solutions $ u_{1} $ in $ K \cap (\bar{\Omega}_{\rho_{2}} \setminus \Omega_{\rho_{*}})$  and $ u_{2} $  in $  K \cap (\bar{\Omega}_{\rho^{*}} \setminus \Omega_{\rho_{2}})$ of the problem \eqref{eq001}-\eqref{eq002} such that $ 0 < \| u_{1}\| < \rho_{2} < \| u_{2} \| .$
\end{proof}
\end{theorem}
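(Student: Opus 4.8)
The plan is to apply Krasnoselskii's theorem (Theorem~\ref{T1}) twice to the completely continuous operator $T$ on the cone $K$: once on an annulus with a small inner radius $\rho_{*}\in(0,\rho_{2})$ and outer radius $\rho_{2}$, and once on an annulus with inner radius $\rho_{2}$ and a large outer radius $\rho^{*}>\rho_{2}$. Hypothesis (H6) will supply the expansion inequality $\|Tu\|\ge\|u\|$ on the sphere $\|u\|=\rho_{2}$, while the two vanishing limits in (H5) will supply compression inequalities $\|Tu\|\le\|u\|$ on the small and on the large spheres. The constants are chosen so that every estimate closes: $\Phi=\Lambda_{1}^{-1}$ is paired with $f^{0}=0$ and $f^{\infty}=0$, and $\Psi=\Lambda_{2}^{-1}$ is paired with (H6).

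For the small sphere I would use $f^{0}=0$: pick $\epsilon\in(0,\Lambda_{1}]$ and $\rho_{*}\in(0,\rho_{2})$ with $f(t,u)\le\epsilon u$ for $t\in[0,1]$ and $0<u\le\rho_{*}$. For $u\in K\cap\partial\Omega_{\rho_{*}}$, with $\Omega_{\rho_{*}}=\{u:\|u\|<\rho_{*}\}$, bounding $H(t,s)$ above by $\big(1+\tfrac{\alpha}{k}+\tfrac1k\sum_{i=1}^{n}\beta_{i}\big)e(s)$ through Lemma~\ref{l2}(ii) gives $Tu(t)\le\epsilon\|u\|\,\tfrac1k\int_{0}^{1}e(s)\,ds=\epsilon\Phi\|u\|\le\|u\|$, hence $\|Tu\|\le\|u\|$ on $\partial\Omega_{\rho_{*}}$. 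For the large sphere I would use $f^{\infty}=0$: pick $\epsilon_{1}\in(0,\Lambda_{1}]$ and $\widetilde\rho>\rho_{2}$ with $f(t,u)\le\epsilon_{1}u$ for $u\ge\widetilde\rho$. Since this controls only large $u$, I would split exactly as in the proof of Theorem~\ref{th1}: if $f$ is bounded, say $f\le L$, take $\rho^{*}\ge\max\{\widetilde\rho,L/\epsilon_{1}\}$; if $f$ is unbounded, use (C1) to get $\sigma$ with $f(t,u)\le\epsilon_{1}\sigma$ on $[0,1]\times[0,\widetilde\rho]$ and take $\rho^{*}\ge\max\{\sigma,\widetilde\rho\}$; in either case $f(t,u)\le\epsilon_{1}\rho^{*}$ for $0\le u\le\rho^{*}$, and the same $\Phi$-estimate (cf.~\eqref{eq50},~\eqref{eq50512}) yields $Tu(t)\le\epsilon_{1}\rho^{*}\Phi\le\rho^{*}=\|u\|$, so $\|Tu\|\le\|u\|$ on $\partial\Omega_{\rho^{*}}$. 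I expect this bounded/unbounded dichotomy to be the main obstacle, since the pointwise decay of $f(t,u)/u$ must be upgraded to a uniform bound on the chosen sphere.

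For the middle sphere $\partial\Omega_{\rho_{2}}$, with $\Omega_{\rho_{2}}=\{u:\|u\|<\rho_{2}\}$, Lemma~\ref{l4} gives $\theta^{3}(1-2\theta)\rho_{2}\le u(t)\le\rho_{2}$ for $t\in[\theta,1-\theta]$ whenever $u\in K\cap\partial\Omega_{\rho_{2}}$, so (H6) applies and $f(s,u(s))\ge M_{2}\rho_{2}$ there. Restricting the integral for $Tu(t)$ to $[\theta,1-\theta]$, restricting the inner $\alpha$-integral to $[\theta,1-\theta]$, and using $G(t,s)\ge\theta^{3}e(s)$ and $\int_{\theta}^{1-\theta}G(\tau,s)\,d\tau\ge\theta^{3}(1-2\theta)e(s)$ from Lemma~\ref{l2}, I obtain, mirroring the computations~\eqref{eq40} and~\eqref{eq159951},
\begin{align*}
Tu(t)&\ge M_{2}\theta^{6}(1-2\theta)^{2}\Big[\int_{\theta}^{1-\theta}\Big(\big(1+\tfrac{\alpha}{k}\big)e(s)+\tfrac1k\sum_{i=1}^{n}\beta_{i}G(\eta_{i},s)\Big)ds\Big]\rho_{2} \\
&= M_{2}\Lambda_{2}^{-1}\rho_{2}\ \ge\ \rho_{2}=\|u\|,
\end{align*}
so $\|Tu\|\ge\|u\|$ on $\partial\Omega_{\rho_{2}}$.

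Finally, since $\rho_{*}<\rho_{2}<\rho^{*}$, we have $0\in\Omega_{\rho_{*}}$, $\overline\Omega_{\rho_{*}}\subset\Omega_{\rho_{2}}$ and $\overline\Omega_{\rho_{2}}\subset\Omega_{\rho^{*}}$. On $K\cap(\overline\Omega_{\rho_{2}}\setminus\Omega_{\rho_{*}})$ we are in case~(a) of Theorem~\ref{T1} (compression on the inner boundary, expansion on the outer one), which yields a fixed point $u_{1}$ with $\rho_{*}\le\|u_{1}\|\le\rho_{2}$; on $K\cap(\overline\Omega_{\rho^{*}}\setminus\Omega_{\rho_{2}})$ we are in case~(b) (expansion on the inner boundary, compression on the outer one), which yields a fixed point $u_{2}$ with $\rho_{2}\le\|u_{2}\|\le\rho^{*}$. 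By the Remark following~\eqref{eq16}, both $u_{1}$ and $u_{2}$ are nonnegative solutions of~\eqref{eq001}-\eqref{eq002}, positivity on $(0,1)$ following from Lemma~\ref{l2} together with $\|u_{1}\|\ge\rho_{*}>0$ (which forces $f(\cdot,u_{1}(\cdot))\not\equiv0$); since the two fixed points lie in disjoint open annuli they are distinct, which gives the stated ordering $0<\|u_{1}\|<\rho_{2}<\|u_{2}\|$.
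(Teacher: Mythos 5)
Your proposal follows essentially the same route as the paper's own proof: the same $\Phi$-estimates derived from $f^{0}=f^{\infty}=0$ (including the identical bounded/unbounded dichotomy for the large sphere), the same $\Psi$-estimate from (H6) on $\partial\Omega_{\rho_{2}}$ using the cone inequality $\theta^{3}(1-2\theta)\rho_{2}\le u(t)\le\rho_{2}$ on $[\theta,1-\theta]$, and the same double application of Theorem~\ref{T1} on the annuli $\rho_{*}<\rho_{2}<\rho^{*}$. The only cosmetic slip is your closing remark that the two annuli are disjoint (they share the sphere $\|u\|=\rho_{2}$); the strict ordering $0<\|u_{1}\|<\rho_{2}<\|u_{2}\|$ is asserted at the same level of detail as in the paper, so nothing essential differs.
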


 \section{Examples}
\begin{exmp}
Consider the boundary value problem

\begin{equation}\label{eq5.1}
       \begin{cases}u^{\prime \prime \prime \prime} (t) + t+ |\cos u |=0,\ 0 < t < 1,\\
    u^{\prime}(0) = u^{\prime}(1) = u^{\prime \prime}(0) =  0,\\
     u(0) = \frac{1}{3} \int_{0}^{1} u(s) ds + \frac{1}{7} u(\frac{7}{15}) + \frac{1}{4} u(\frac{2}{3}) + \frac{3}{84} u(\frac{11}{13}),
       \end{cases}
       \end{equation}
where  $ f(t,u) =t+ |\cos u | , \alpha = \frac{1}{3},\beta_{1}=\frac{1}{7},\beta_{2}=\frac{1}{4},\beta_{3}=\frac{3}{84},\eta_{1}=\frac{7}{15}, \eta_{2}=\frac{2}{3},$ and $ \eta_{3}=\frac{11}{13}$.\\
 We have $ k= 1-( \frac{1}{3}+ \frac{1}{7}+  \frac{1}{4} +  \frac{3}{84} ) = \frac{5}{21}> 0 ,f_{0} = \infty , f^{\infty} = 0 .$\\ Then, by \rm{(H1)} of Theorem \ref{th1} , the problem \eqref{eq5.1} has at least one positive solution.
\end{exmp}

\begin{exmp}
As a second example we consider the following boundary value problem

\begin{equation}\label{eq5.2}
\begin{cases}
u^{\prime \prime \prime \prime} (t)+ u^{2}  e^{u} \ln (1+t+u),\ 0 < t < 1,\\
 u^{\prime}(0) = u^{\prime}(1) = u^{\prime \prime}(0) =  0,\\
 u(0) = \frac{1}{4} \int_{0}^{1} u(s) ds + \frac{1}{12} u(\frac{1}{8}) + \frac{1}{6} u(\frac{1}{4}),
  \end{cases}
\end{equation}
where  $ f(t,u) = u^{2} e^{u} \ln (1+t+u) \geq 0 , \alpha = \frac{1}{4}, \beta_{1}=\frac{1}{12}, \beta_{2}=\frac{1}{6},\eta_{1}=\frac{1}{8},$ and $ \eta_{2}=\frac{1}{4}$. \\
We have $ k= 1-(\frac{1}{4}+ \frac{1}{12} + \frac{1}{6} ) = \frac{1}{2} > 0 , f^{0}  = 0 , f_{\infty}= \infty $.\\ So, by  \rm{(H2)} of Theorem \ref{th1}, the problem \eqref{eq5.2} has at least one positive solution.
\end{exmp}

\begin{exmp}
Let the following boundary value problem

\begin{equation}\label{eq5.3}
\begin{cases}
u^{\prime \prime \prime \prime} (t) + (1+t) e^{u}=0,\ 0 < t < 1,\\
 u^{\prime}(0) = u^{\prime}(1) = u^{\prime \prime}(0) = 0,\\
 u(0) = \frac{1}{30} \int_{0}^{1} u(s) ds + \frac{1}{60} u(\frac{1}{4}) + \frac{1}{120} u(\frac{1}{3}) + \frac{1}{240} u(\frac{1}{2}),
 \end{cases}
\end{equation}
where $ f(t,u) = (1+t) e^{u}, \alpha = \frac{1}{30}, \beta_{1}= \frac{1}{60}, \beta_{2}= \frac{1}{120}, \beta_{3}=\frac{1}{240}, \eta_{1}= \frac{1}{4}, \eta_{2}=\frac{1}{3},$ and $ \eta_{3}=\frac{1}{2}.$

Then $ f_{0} =f_{\infty} = \infty , k= 1-(\frac{1}{30} + \frac{1}{60} + \frac{1}{120} + \frac{1}{240} )= \frac{15}{16}.$ \\
On the other hand, choosing $ \rho_{1}= 1$ and $ M_{1} =\Lambda_{1}$. Then  $ f(t,u) \leq 2 e $ , for $(t,u) \in [0,1]\times (0,\rho_{1}] $ and $ \Lambda_{1} = 6k = \frac{45}{8} = 5,625.$ So
  $$ f(t,u) \leq 2 e \leq 5,625= M_{1} \rho_{1}.$$

 By Theorem \ref{th2}, the problem \eqref{eq5.3} has at least two positive solutions.
\end{exmp}

\begin{exmp}
Consider the following boundary value problem

\begin{equation}\label{eq5.4}
\begin{cases}
u^{\prime \prime \prime \prime} (t)+6528 \times 10^{9} u^{2}e^{1-u}=0,\ 0 < t < 1,\\
u^{\prime}(0) = u^{\prime}(1) = u^{\prime \prime}(0) =  0,\\
u(0) = \frac{1}{10} \int_{0}^{1} u(s) ds + \frac{1}{20} u(\frac{1}{2}),
 \end{cases}
\end{equation}
where  $ f(t,u) = f(u) = 6528 \times 10^{9} u^{2}e^{1-u}, \alpha = \frac{1}{10}, \beta_{1}=\beta= \frac{1}{20}$ and $ \eta_{1}=\eta=\frac{1}{2}. $ Then  $ f^{0} =f^{\infty} = 0,\ k= \frac{17}{20} > 0,\ (1+ \frac{\alpha}{k})= \frac{19}{17},\ \frac{\beta}{k}= \frac{1}{17}.$
And
$$ \Psi= \theta^{6} (1-2\theta)^{2} \int_{\theta}^{1-\theta} \left( \bigg(1+\frac{\alpha}{k}\bigg) e(s) + \frac{\beta}{k}  G\bigg( \frac{1}{2} ,s \bigg)\right) ds .$$
So
\begin{equation}
\begin{split} \label{eq987963}
\Psi= &\theta^{6} (1-2\theta)^{2} \bigg[\int_{\theta}^{1-\theta} \frac{19}{17} \frac{1}{6} s(1-s)^{2} ds + \frac{1}{17} \frac{1}{6}  \int_{\theta}^{\frac{1}{2}} \frac{1}{8} (1-s)^{2} ds\\
& - \frac{1}{17} \frac{1}{6}  \int_{\theta}^{\frac{1}{2}}  \bigg(\frac{1}{2}-s\bigg)^{3} ds +  \frac{1}{17} \frac{1}{6}  \int_{\frac{1}{2}}^{1-\theta} \frac{1}{8} (1-s)^{2} ds \bigg]\\
= & \frac{\theta^{6} (1-2\theta)^{2}}{102} \bigg[19 \Psi_{1}+ \frac{1}{8} \Psi_{2}^{1} - \Psi_{2}^{2} + \frac{1}{8}\Psi_{3}
 \bigg ],
\end{split}
\end{equation}
with
\begin{align*}
\Psi_{1} &= \int_{\theta}^{1-\theta} s(1-s)^{2} ds = \frac{1}{6}(1-2\theta) \Big(\frac{1}{2}+\theta-\theta^{2}\Big) ,\\
\Psi_{2}^{1} &= \int_{\theta}^{\frac{1}{2}} (1-s)^{2}ds = \frac{1}{6}(1-2\theta) \Big(\frac{7}{4}-\frac{5}{2}\theta+\theta^{2}\Big),\\
\Psi_{2}^{2} &= \int_{\theta}^{\frac{1}{2}} \Big(\frac{1}{2}-s\Big)^{3}ds = \frac{1}{64}(1-2\theta)^{4},\\
\Psi_{3} &= \int_{\frac{1}{2}}^{1-\theta} (1-s)^{2}ds =\frac{1}{6}(1-2\theta) \Big(\frac{1}{4}+\frac{1}{2}\theta+\theta^{2}\Big) .
\end{align*}
\begin{equation*}
\Psi=  \frac{1}{6528}\theta^{6}{(1-2\theta)^{3}}(103+206{\theta}-212{\theta^{2}}+8{\theta^{3}}).
\end{equation*}

So
  $$ \Lambda_{2} = \frac{6528}{\theta^{6}{(1-2\theta)^{3}}(103+206{\theta}-212{\theta^{2}}+8{\theta^{3}})}.$$

 On the other hand, let us choose $ \rho_{2}= 1$ and $  M_{2} =\Lambda_{2}$. Then\\
   $ f(t,u)= f(u)  \geq  6528 \times 10^{9} \theta^{6}(1-2\theta)^{2} $, for $(t,u) \in [\theta, 1-\theta]\times [\theta^{3} (1-2\theta) \rho_{2},\rho_{2}].$\\
 So
 \begin{equation*}
 f(t,u)\geq 10^{9} \theta^{12}(1-2\theta)^{5} (103+206{\theta}-212{\theta^{2}}+8{\theta^{3}}) \Lambda_{2} .
 \end{equation*}

 Using the Mathematica software, we easily check that
  \begin{equation*}
 10^{9} \theta^{12}(1-2\theta)^{5} (103+206{\theta}-212{\theta^{2}}+8{\theta^{3}})  \geq 1, \ \text{for all}\ \theta \in
  \Big[\frac{17}{125},\frac{12}{25}\Big],
  \end{equation*}
 and consequently  $$ f(t,u) \geq \Lambda_{2} = M_{2}.$$

 By Theorem \ref{th3}, the problem \eqref{eq5.4} has at least two positive solutions.
\end{exmp}

\end{document}